\newcommand{\norm}[2]{\left\|#1\right\|_{#2}}
\newcommand{\scalar}[2]{\langle{ #1},{#2} \rangle}
\newcommand{\set}[1]{\left\{#1\right\}}
\newcommand{\lr}[1]{\left(#1\right)}
\newcommand{\nat}{\mathbb N}
\newcommand{\real}{\mathbb R}
\newcommand{\rk}{\operatorname{rank}}
\newcommand{\domain}{\mathcal D}
\newcommand{\range}{\mathcal R}
\newcommand{\yd}{y^{\delta}}
\newcommand{\kast}{k_{\ast}}
\newcommand{\xkd}[1]{x_{#1}^\delta}
\newcommand{\cd}{c\lr{\frac 1 d}}
\newcommand{\msp}{\mu}
\newtheorem{thm}{Theorem}
\newtheorem{lemma}{Lemma}
\newtheorem{prop}{Proposition}
\newtheorem{cor}{Corollary}
\theoremstyle{definition}
\newtheorem{de}{Definition}
\theoremstyle{remark}
\newtheorem{rem}{Remark}
\newtheorem{xmpl}{Example}
\renewenvironment{description}[1][0pt]
  {\list{}{\labelwidth=0pt \leftmargin=#1
   }}
  {\endlist}
\newcommand{\pmathe}[1]{#1}
\author{Peter Math\'e}
\address{Weierstra{\ss} Institute for Applied Analysis and Stochastics, Mohrenstra{\ss}e 39, 10117 Berlin,  Germany}
\author{Bernd Hofmann}
\address{Faculty of Mathematics, Chemnitz University of Technology,
 09107 Chemnitz,  Germany}
\title[Tractability of ill-posed problems]{Tractability of linear ill-posed problems in Hilbert space}
\date{\today}
\keywords{curse of dimensionality, tractability, multivariate problems}
\begin{document}
\begin{abstract}
  We introduce a notion of tractability for ill-posed operator
  equations in Hilbert space. For such operator equations the
  asymptotics of the best possible rate of reconstruction in terms of
  the underlying noise level is known in many cases. However, the relevant
  question is, which level of discretization, again driven by the
  noise level,  is required in order to
  achieve this best possible accuracy.
  The proposed concept adapts the one from Information-based Complexity.
  Several examples indicate the relevance of this concept in the light of the curse of dimensionality.
 \end{abstract}
\maketitle
\section{Introduction}
\label{sec:intro}

We shall introduce the concept of \emph{tractability}
for linear \emph{ill-posed problems}, which are modeled in the form of
\emph{operator equations}
\begin{equation} \label{eq:opeq}
 A\,x\,=\,y\,,
\end{equation}
where some injective bounded linear operator~$A\colon X \to Y$ is
acting between real infinite dimensional Hilbert spaces~$X$
and~$Y$.
We consider the {noise model}
\begin{equation}
  \label{eq:noise}
  \yd:= A x + \delta\xi,
\end{equation}
where the unknown noise element~$\xi$ is norm bounded by one, such
that~$\norm{A x - \yd}{Y}\leq \delta$. \pmathe{The goal is to
  approximately reconstruct the unknown element~$x$ from the noisy data~$\yd$.}
There is vast literature available referring to the analysis of such ill-posed problems, and the standard monograph is~\cite{MR1408680}. The optimal
rates of reconstruction, under appropriate smoothness assumptions,  are known in many cases,
and often  these are given in terms of the {noise level}~$\delta>0$.

As usual for ill-posed operator equations~(\ref{eq:opeq}), we consider smoothness \emph{relative} to the given
operator~$A$. Hence we assume that solution smoothness is given in
terms of source sets as follows:
 \begin{de}
   [source set]\label{de:smoothness}
   There is an index function\footnote{We call a
     function~$\varphi\colon (0,\infty) \to [0,\infty)$ an index
     function, if it is continuous, non-decreasing and satisfies the limit condition~$\lim_{t\searrow
       0}\varphi(t) =0.$ }~$\varphi$ such that the unknown solution
   obeys
   $$
x \in H_{\varphi} := \set{x,\ x= \varphi(A^{\ast}A)v,\ \norm
  {v}{X}\leq 1}.
   $$
 \end{de}

Often the ill-posed operator equation~\eqref{eq:opeq} refers to
Hilbert spaces of functions on a $d$-dimensional domain, see e.g.~the examples
given in the monograph~\cite[Chapt.~5]{MR958691}.
  In many cases the reconstruction rate deteriorates when the spatial
  dimension $d$ grows. This is best seen in the decay rate of the singular values of  embeddings
  between Sobolev spaces in Example~\ref{xmpl:sobolev} below, which exhibits a
  behavior of the form $n^{-a/d}$, where~$n$ is the cardinality of data, and~$a$
  encodes smoothness information. This phenomenon is often called `curse
  of dimensionality'\footnote{The meaning of this notion differs in
    varying places. Here we just mean that the rate as a function of
    the amount of information,  deteriorates exponentially in the
    dimension.}, and it indicates that the amount of computational
  effort that
  is required to find a suitable reconstruction increases
  exponentially with the dimension $d$. However, this may be foiled when
  the leading constant in the asymptotics decays rapidly with
  increasing dimension. In such a case, the curse of dimensionality
  does not necessarily reflect the difficulties in solving the problem at hand.

 In contrast, the associated decay rate may be dimension independent `up to
   some logarithmic factor', which depends on a power of the
   dimension $d$. This phenomenon is also well known, specifically for
   Sobolev embeddings when the underlying spaces are anisotropic. Here
 we highlight the inverse problem of
\emph{reconstruction of copula densities}, representing the correlation structure of a
family of assets, where the \emph{dimensionality} is given by the number~$d$ of
assets (cf.~\cite{Copulas06}). The recent note~\cite{HF23}, for example, outlines the mathematical
model, where the forward operator is a \emph{multivariate integration}
operator. Details for this approach will be given below in Section~\ref{sec:integration}. This model has also been investigated in \cite{Uhlig15} with respect to the stable numerical solution
of the inverse problem of copula density identification, which is  severely hampered by limited computational resources. Regarding the
reconstruction rate, as~$\delta\to 0$, there seems to be no impact
arising from the number of assets. However, as will be made precise below, the
\emph{discretization level} (amount on linear information) may depend
on~$d$, even~\emph{exponentially}.
Consequently, for large~$d$ it may be \emph{infeasible in practice} to reach the region
with the optimal rate of reconstruction.

Therefore, in order to understand the difficulty to solve a
  numerical problem to a given accuracy, a more precise understanding
  is required. This purpose is achieved by studying the tractability of
  such problems. Within the
theory of \emph{Information-based Complexity} there is a long
discussion in this direction, and we refer to the three
volumes~\cite{MR2455266,MR2676032,MR2987170}.
As far as we are aware of, in the literature there is
by now no discussion of the tractability of ill-posed problems, which
takes into account the impact of both occurring  facets: the
dimension~$d$ and the reconstruction rate in terms of~$\delta$. Our goal here is to adapt this
approach to the theory of ill-posed operator equations. Recently, the
authors in~\cite{MR4626413} consider tractability of problems when the
information is noisy. Still this concerns direct problems. In ibid. the
focus is on the comparison of tractability with and without noisy
information. For ill-posed operator equations the presence of noise is
constitutive for the error analysis. Reconstruction rates in the
presense of exact data (often called bias decay) differ significantly from
the rates under noise.

Thus, the outline is as follows. We shall formulate the
  problem, including some examples in Section~\ref{sec:complexity}, and
we give the formal definition of \emph{tractability} in
Section~\ref{sec:tractability}. \pmathe{
The main observation here is presented
  as Theorem~\ref{prop:inverse-direct}. We establish a one-to-one correspondence of the given family of inverse problems, and a
  related family of direct problems, where the class of problem
  elements is defined via the smoothness class from
  Definition~\ref{de:smoothness}. This shows that our notion of 
tractability of a family of  inverse problems is consistent with some companion family of direct ones.}

We discuss examples for operators with
power-type decay of singular values in
Section~\ref{sec:power-type}. Finally, we discuss the family of multivariate
integration problems in Section~\ref{sec:integration}. The tractability
of this problem family is stated as Theorem~\ref{prop:bernd}.

\section{Information complexity, problem formulation}
\label{sec:complexity}

Our focus is on operator equations~(\ref{eq:opeq}) with \emph{compact forward operator}~$A\colon X \to
Y$. These operators allow for a singular value
decomposition (SVD)~$\{s_{j},u_{j},v_{j}\}_{j=1}^{\infty}$ with a  non-increasing infinite
sequence~$\{s_j\}_{j=1}^\infty$ of \emph{singular values},
tending to zero as~$j$ tends to infinity, and sequences~$u_{j}, \
v_{j}$,\ $j=1,2,\dots$, of corresponding \emph{eigenfunctions} for the
self-adjoint operators~$A^{\ast}A$ and~$A A^{\ast}$, respectively.

The standard noise model~(\ref{eq:noise}) is not realistic, as
elements in infinite dimensional Hilbert spaces do not fit numerical computations, and
discretization is required to do so. We follow the standard approach
as presented in~\cite[Chapt.~3]{MR958691}.

\subsection{Discretization}
\label{sec:discretize}

Here we restrict ourselves to \emph{(non-adaptive) linear
  information}, i.e., we choose linear functionals~$L_{1},\dots,
L_{n}\in Y^{\ast}$ and build the~$n$-vector
\begin{equation}
  \label{eq:Nn}
  N_{n}(\yd) := \lr{L_{1}(\yd),\dots,L_{n}(\yd)},\quad \yd\in Y,
\end{equation}
which we shall call information~$N_{n}$. For suitable reconstructions $R$ to the unknown elements $x \in X$ we then may use some arbitrary mapping~$\psi$ into the space $X$, acting
on~$N_{n}(\yd)\in \real^{n}$. Thus the admissible reconstructions are
given as
\begin{equation}
  \label{eq:R}
 \xkd n :=  R(\yd) = \psi(N_{n}(\yd)),\quad \yd \in Y.
\end{equation}

Such approach to ill-posed problems which mimics Information-based
Complexity was first used in~\cite{MR3574546}, and we follow this
pathway.

 We
 consider the worst-case error for any such
 reconstruction~$R$ which uses information of cardinality at most~$n$,
 uniformly on source sets~$H_{\varphi}$, given as in
 Definition~\ref{de:smoothness}.
 At any instance~$x\in
 H_{\varphi}$ the error is given as
 \begin{align*}
   \label{eq:err-x}
   e(R,x,\delta)&:= \sup_{\norm{\xi}{}\leq 1}\norm{ x - R(Ax + \delta\xi)}{X},
   \intertext{and the error uniformly for~$x\in H_{\varphi}$ is given
                  as}
    e(R,H_{\varphi},\delta)&:= \sup_{x\in H_{\varphi}} e(R,x,\delta).
 \end{align*}
 The minimal error~$e_{n}(H_{\varphi},\delta)$ is then the minimum over
 all reconstructions~$R$ using information of cardinality at most~$n$.

\subsection{Problem formulation}
\label{sec:problem}

 A fundamental observation from~\cite{MR3574546} is here rephrased as
 follows.
 \begin{prop}\label{pro:MP}
   Suppose that the operator~$A$ has an SVD with singular values
   $\{s_j\}_{j=1}^\infty$, and that smoothness is given as in
   Definition~\ref{de:smoothness} with an index function~$\varphi$.
   For~$j \in\nat$ we have that
   $$
e_{j}(H_{\varphi},\delta) \geq \varphi\lr{s_{j+1}^{2}}.
$$
\end{prop}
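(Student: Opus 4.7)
The plan is to run the classical symmetric (two-point) lower bound, adapted to the source set~$H_{\varphi}$. I would exhibit a nonzero element~$z\in H_{\varphi}$ with $-z\in H_{\varphi}$ whose information $N_{j}(Az)$ vanishes, so that $\pm z$ produce the same data under the admissible noise $\xi=0$ and any reconstruction~$R=\psi\circ N_{j}$ is forced to err by at least~$\norm{z}{X}$ on one of the two points.

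First, I would use a dimension count in the $(j+1)$-dimensional subspace $V_{j+1}:=\operatorname{span}(u_{1},\dots,u_{j+1})\subset X$. Since $z\mapsto N_{j}(Az)$ maps $V_{j+1}$ linearly into~$\real^{j}$, its kernel is nontrivial, so there exists $z=\sum_{k=1}^{j+1}\alpha_{k}u_{k}\neq 0$ with $N_{j}(Az)=0$. Next, to place $z$ inside $H_{\varphi}$, I write $z=\varphi(A^{\ast}A)w$ with $w=\sum_{k=1}^{j+1}(\alpha_{k}/\varphi(s_{k}^{2}))u_{k}$. The monotonicity of~$\varphi$ together with $s_{k}\geq s_{j+1}$ for $k\leq j+1$ yields $\varphi(s_{k}^{2})\geq\varphi(s_{j+1}^{2})$, whence
\[
\norm{w}{X}^{2}=\sum_{k=1}^{j+1}\frac{\alpha_{k}^{2}}{\varphi(s_{k}^{2})^{2}}\leq\frac{\norm{z}{X}^{2}}{\varphi(s_{j+1}^{2})^{2}}.
\]
After rescaling I may assume $\norm{z}{X}=\varphi(s_{j+1}^{2})$, which ensures $\norm{w}{X}\leq 1$ and hence $z\in H_{\varphi}$.

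Finally, both $z$ and $-z$ lie in~$H_{\varphi}$, and choosing $\xi=0$ (admissible since $\norm{\xi}{}\leq 1$) yields data $\yd=Az$ and $\yd=-Az$, respectively. By linearity $N_{j}(-Az)=-N_{j}(Az)=0$, so both data vectors carry the same information~$0\in\real^{j}$, and any reconstruction returns the same element $\hat x:=\psi(0)$ in both cases. The triangle inequality then gives
\[
\norm{z-\hat x}{X}+\norm{-z-\hat x}{X}\geq 2\norm{z}{X}=2\varphi(s_{j+1}^{2}),
\]
so at least one of the two summands is bounded below by~$\varphi(s_{j+1}^{2})$. Hence $e(R,H_{\varphi},\delta)\geq\varphi(s_{j+1}^{2})$ for every~$R$ using information of cardinality at most~$j$, and taking the infimum over~$R$ yields the proposition.

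The one genuinely delicate step I expect is the placement of the dimension-counted element inside the source set at the correct scale: it is precisely the monotonicity of~$\varphi$ combined with the ordering $s_{1}\geq s_{2}\geq\cdots$ of the singular values that makes $\varphi(s_{j+1}^{2})$ the sharp bound, since the worst-case coefficient in the expression for~$\norm{w}{X}$ is controlled exactly at the index~$k=j+1$. The remainder of the argument is a routine two-point obstruction and uses nothing about the noise beyond admissibility of $\xi=0$.
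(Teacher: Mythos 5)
Your proof is correct. The paper itself does not prove Proposition~1 directly: it defers to \cite[Theorem~1]{MR3574546}, which states the lower bound in terms of Gelfand widths, together with \cite[Lemma~1]{MR3574546} identifying those widths with the singular values in the Hilbert space setting. What you have written is the self-contained, classical null-space (two-point) argument that underlies that cited result: a dimension count in $\operatorname{span}(u_{1},\dots,u_{j+1})$ produces a nonzero element annihilated by the $j$ functionals, the ordering $s_{1}\geq s_{2}\geq\cdots$ together with monotonicity of $\varphi$ places the rescaled element (and its negative) in $H_{\varphi}$ at norm exactly $\varphi\lr{s_{j+1}^{2}}$, and the indistinguishability of $\pm z$ under noise-free data forces the error. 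This is more elementary and transparent than the width machinery, at the cost of being tied to the specific SVD basis; the Gelfand-width formulation buys generality (it covers arbitrary information maps without re-running the dimension count each time), but in the Hilbert space case the two routes coincide. Two harmless remarks: you should note that if $\varphi\lr{s_{j+1}^{2}}=0$ the claim is trivial, and otherwise $\varphi(s_{k}^{2})\geq\varphi\lr{s_{j+1}^{2}}>0$ for $k\leq j+1$, so the division defining $w$ is legitimate; and the choice $\xi=0$ is indeed admissible, which is consistent with the lower bound being independent of $\delta$.
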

\begin{rem}
  We mention that the assertion of Proposition~\ref{pro:MP} is stated in  \cite[Theorem~1]{MR3574546} for the \emph{Gelfand
    widths}, but by virtue of~\cite[Lemma~1]{MR3574546} these coincide with the singular values.
\end{rem}
\begin{rem}
  In the noiseless case ($\delta=0$) the right hand side above is attained by the error of spectral
  cut-off
  $$
 R(\yd) := \sum_{i=1}^{j} \frac 1 {s_{i}} \scalar{\yd}{v_{i}} u_{i}.
  $$
  uniformly on the smoothness class~$H_{\varphi}$, taking into account
  the $j$ largest singular values. In this case the
  information~$N_{j}$ is based on using the first
  eigenfunctions~$v_{1},\dots,v_{j}$ of the operator~$A A^{\ast}$.
\end{rem}

The above lower bound from Proposition~\ref{pro:MP} is contrasted to the well-known upper bound from
the Melkman-Micchelli construction. We denote by~$e(H_{\varphi},\delta)$ the best
possible accuracy of any reconstruction making use of the full
data~$\yd$. Referring to~\cite{MR2458286} we have
\begin{equation}
  \label{eq:melkman}
  e(H_{\varphi},\delta) \leq \varphi\lr{\Theta^{-1}(\delta)},
\end{equation}
where~$\Theta(t) := \sqrt t \varphi(t)$ is the companion index
function to $\varphi$.

Consequently, the rate~$\delta \mapsto \varphi\lr{\Theta^{-1}(\delta)}$ is the best possible reconstruction rate. It represents the asymptotic regime,
and hence the question is how far discretization needs to go, in order
to reach this regime.

Therefore it is interesting to discuss the index
\begin{equation}
  \label{eq:kast}
  k_{\ast}(\delta) :=
  \begin{cases}
    1, & \text{if}\ \Theta(s_{1}^{2}) \leq   \delta,\\
    \max\set{k \in \nat:\quad \Theta(s_{k}^{2}) >  \delta},& \text{otherwise},
  \end{cases}
\end{equation}
which describes the minimal level of discretization required to achieve order
optimal regularization.

\begin{prop}
  For the cardinality~$k_{\ast}$ from~(\ref{eq:kast}) we have that
  $$
\varphi\lr{s_{k_{\ast}+1}^{2}} \leq
\varphi\lr{\Theta^{-1}(\delta)}.
  $$
\end{prop}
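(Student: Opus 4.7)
The plan is to exploit the monotonicity of the companion function $\Theta(t) = \sqrt{t}\,\varphi(t)$ and then read off the claim from the defining property of $k_{\ast}$. Since $\varphi$ is a non-decreasing index function and $\sqrt{\cdot}$ is strictly increasing, $\Theta$ itself is non-decreasing on $(0,\infty)$, so one can unambiguously speak of $\Theta^{-1}(\delta)$ (on the relevant range), and $\Theta(s) \leq \delta$ is equivalent to $s \leq \Theta^{-1}(\delta)$ wherever the inverse is defined.

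I would split the argument according to the two branches of~\eqref{eq:kast}. In the generic branch, $k_{\ast}$ is the largest index with $\Theta(s_{k}^{2}) > \delta$, hence by maximality $\Theta(s_{k_{\ast}+1}^{2}) \leq \delta$. Applying $\Theta^{-1}$ yields $s_{k_{\ast}+1}^{2} \leq \Theta^{-1}(\delta)$, and then monotonicity of the index function $\varphi$ gives the claim $\varphi(s_{k_{\ast}+1}^{2}) \leq \varphi(\Theta^{-1}(\delta))$.

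In the degenerate branch, $\Theta(s_{1}^{2}) \leq \delta$ and $k_{\ast}=1$. Here the singular values are non-increasing, so $s_{2}^{2} \leq s_{1}^{2}$, whence $\Theta(s_{2}^{2}) \leq \Theta(s_{1}^{2}) \leq \delta$. The same two steps as before (invert $\Theta$, then apply $\varphi$) close this case as well.

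I do not anticipate a real obstacle; the statement is essentially the definition of $k_{\ast}$ read through the monotone functions $\Theta$ and $\varphi$. The only point deserving care is ensuring that $\Theta^{-1}(\delta)$ makes sense in both branches, which is why I would state the monotonicity of $\Theta$ explicitly at the outset and treat the boundary case $k_{\ast}=1$ separately.
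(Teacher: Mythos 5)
Your proposal is correct and follows essentially the same route as the paper: from the definition of $k_{\ast}$ one gets $\Theta(s_{k_{\ast}+1}^{2}) \leq \delta$ (in both branches), hence $s_{k_{\ast}+1}^{2} \leq \Theta^{-1}(\delta)$, and monotonicity of $\varphi$ finishes the argument. Your explicit treatment of the degenerate branch $k_{\ast}=1$ and the remark on the monotonicity of $\Theta$ only make more precise what the paper compresses into ``by construction.''
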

\begin{proof}
  By construction we see that
$
\Theta(s_{k_{\ast}+1}^{2}) \leq \delta.
$
Hence
$$
\varphi\lr{s_{k_{\ast}+1}^{2}} \leq
\varphi\lr{\Theta^{-1}(\delta)},
$$
by the monotonicity of the index function~$\varphi$.
\end{proof}
 Thus, from~$k_{\ast}$ on we see the
asymptotic regime given in~(\ref{eq:melkman}) with the
rate~$\varphi\lr{\Theta^{-1}(\delta)}$, and the size of~$k_{\ast}$ is
a measure of the computational difficulty of the problem at hand.
We highlight this by the following first example.
\begin{xmpl}
  [moderately ill-posed operator] \label{xmpl:moderate} Here we focus on the error
  behavior as~$\delta\to~0$ for a problem with operator whose
  singular values tend to zero polynomially. Precisely, fix~$A$
  with~$s_{j}(A) \sim j^{-a}\;(j \in \nat, a>0)$. Let us assume power type smoothness as
  in Definition~\ref{de:smoothness} for a function~$\varphi(t) =
  t^{p}, \ t>0$, for
  some exponent $p>0$. In this case we see
  that~$k_{\ast}(\delta)\asymp \lr{\frac 1
    \delta}^{\frac{1}{2a(p+1/2)}}$ as $\delta\to 0$, such that the discretization
  level~$k_{\ast}$ increases polynomially in~$1/\delta$, which is
  assumed to be feasible. The asymptotically optimal rate will the be
  seen as~$\delta\to \delta^{p/(p+1/2)}$, regardless of the exponent~$a$.
\end{xmpl}

We point out the following. When the  solution
smoothness is measured in terms of source conditions, as this is done
in Definition~\ref{de:smoothness}, the obtained error bounds are
given in terms of the corresponding index functions, and hence these
are independent of the decay rates of the singular values of the
governing operator~$A$. In contrast, the cardinality~$k_{\ast}$ very
well depends on the singular values,  as well as the noise
level~$\delta$. The question that we are to address is the following: If
we need a discretization level~$k_{\ast}$ in order to enter the
asymptotic regime, will this level be feasible for a problem at hand?
For further motivation we present the next example, which is a slight
  variation of the first one.
\begin{xmpl}
  [mildly ill-posed operator]\label{xmpl:mild}
  Here we consider a similar problem as in
  Example~\ref{xmpl:moderate}, and we assume power type smoothness as
  for a function~$\varphi(t) =  t^{p}, \ t>0$, for
  some exponent $p>0$. However, the singular values of the
  operator~$A$ tend to zero slowly. Precisely, fix~$A$
  with~$s_{j}(A) \sim 1/\log (j)\;(j \in \nat)$.
  We need to describe~$k_{\ast}$ based on formula~\eqref{eq:kast} and
  using the companion function~$\Theta(t) = t^{p+1/2}$ to $\varphi$.
  Again, the order optimal rate is given as~$\delta \to
 \delta^{p/(p+1/2)}$,
 but here we have
  $$
 k_{\ast}(\delta) \sim  \exp\lr{\lr{\frac 1 \delta}^{1/(2p+1)}}\quad \mbox{as} \quad
 \delta\to 0.
 $$
 Thus~$k_{\ast}$ is exponential in~$1/\delta$.
 For a noise level~$\delta:= 10^{-4}$ and with an exponent $p=1/2$
 in the index function $\varphi$, which characterizes the solution smoothness
 as $x\in\range(A^{\ast})$,  we have that $k_{\ast}\sim  e^{100}$. This is certainly not feasible.

 We conclude that it is difficult (intractable) to
 discretize ill-posed problems with mildly ill-posed operator to
 enter the asymptotic regime, even if the noise level $\delta$ is moderate.
 \hfill \fbox{}\end{xmpl}

For $d$-dimensional ($d$-variate) situations, singular values and hence the
index~$k_{\ast}$ depend on both the dimension $d>1$ and the noise
level $\delta>0$.  We present in this context another illustrative example.
\begin{xmpl}[Sobolev embeddings]\label{xmpl:sobolev}
 Here we shall consider an ill-posed problem when the forward operator acts along a given scale of Sobolev spaces~$W^{\msp}_{2}(\Omega)$ on  some bounded
$C^{\infty}$-domain~$\Omega \subset \real^d$ (boundary conditions may be
imposed). This means that $d$ is the spatial dimension of the
domain~$\Omega$.

The spaces $W^{\msp}_{2}(\Omega)$ form, for a given interval $[-a,a]\;(a>0)$, a scale of Hilbert spaces
(see, e.g., \cite{Neubauer88}) with two-sided estimates
generated by some unbounded self-adjoint operator, say~$L$.
For~$0 \leq \nu \leq a$
we have that~$x\in W^{\nu}_{2}(\Omega) $ exactly if~$x
\in\domain(L^{\nu})$ belongs to the domain of~$L^{\nu}$. Negative
smoothness is given by duality.

The above phrase `along a scale' means that~$A$ acts
from~$W^{-a}(\Omega)$ to~$Y$, and 
there are constants~$0 < m \leq M < \infty$, for which
\begin{equation}
  \label{eq:link}
 m \norm{x}{-a}  \leq \norm{Ax}{Y} \leq M \norm{x}{-a},\quad x\in W^{-a}_{2}(\Omega).
\end{equation}
Also, we assume that the given solution smoothness is~$x\in
W^{p}_{2}(\Omega) = \range(L^{-p})$, and hence there is some source element~$v$ with~$x= L^{-p}v$. For details we also refer to the monograph~\cite[Chapt.~4.9]{MR1328645}.

The crucial tool for understanding this situation is given by Douglas'
range inclusion theorem, see its formulation in~\cite{MR3985479}. Thus we see that condition~(\ref{eq:link}) is equivalent
to~$\range(\lr{A^{\ast}A}^{1/2}) = \range(L^{-a})$ (with corresponding
norm bounds). Using Heinz'
inequality (\cite[Prop.~8.21]{MR1408680}) this yields for~$0 < p \leq a$
that~$\range(\lr{A^{\ast}A}^{p/(2a)}) = \range(L^{-p})$. The link
condition also has a consequence given by Weyl's monotonicity theorem,
see~\cite[Chapt.~III.2.3]{MR1477662}, and this reads as $m s_{j}(L^{-a}) \leq
s_{j}(A)\leq M s_{j}(L^{-a})$ for $j \in \nat$, where~$s_{j}:= s_{j}(A)$
and~$s_{j}(L^{-a})$ denote the singular values of~$A$ and~$L^{-a}$,
respectively.  It will be important
to know that asymptotically~$s_{j}(L^{-a}) \asymp j^{-a/d}$
as~$j\to\infty$. Hence, the dimensionality of the domain enters the
decay rate of the singular values.

For convenience let us consider the index function~$\varphi(t):= t^{p/(2a)}$
for describing the solution smoothness and its companion function~$ \Theta(t) := \sqrt t \varphi(t) =
t^{(a+p)/(2a)}$. Moreover, for simplicity, we assume that the singular
value decomposition of~$A$ is available. Then we can
use this to build reconstructions
\begin{equation}
  \label{eq:xkd}
\xkd n :=
\sum_{j=1}^{n}\frac{1}{s_{j}}\scalar{\yd}{v_{j}}u_{j}\quad (n \in \nat)
\end{equation}
from the singular value decomposition. Straightforward calculations
yield the order optimal error bound
\begin{equation}
  \label{eq:error-bound}
  \norm{x - \xkd n}{X} \leq \varphi\lr{s_{n+1}^{2}} +
  \frac{\delta}{s_{n}},\quad (n \in \nat)\,.
\end{equation}
For the cardinality~$k_{\ast}$ from~(\ref{eq:kast}) this  gives the error bound
$$
  \norm{x - \xkd {\kast}}{X} \leq 2  \varphi\lr{s_{\kast}^{2}}.
  $$
 What will be the size of~$k_{\ast}$ in terms of the
  dimension~$d$ and of the noise level~$\delta$? Taking into account Weyl's
  monotonicity theorem and the present structure of the function~$\Theta$,
  we see that, for small~$0 <\delta < 1$,
  \begin{equation}
    \label{eq:kast-dimension}
    \kast(\delta,d) \asymp \lr{\frac 1 \delta}^{\frac d {a+p}} \quad \mbox{as} \quad  \delta\to 0,
  \end{equation}
with an implied rate~$\delta \to \delta^{p/(a+p)}$ as~$\delta\to
  0$. This rate is order optimal under the present conditions, but we
  stress that the discretization level~$\kast$ depends on~$1/\delta$
  with a power having the dimension~$d$ in its enumerator.
Assuming that the leading constant, say~$C(d)>0$ in~(\ref{eq:kast-dimension}) is
bounded away from zero, say~$C(d)\geq 1$ for simplicity, and for~$a=p=1/2$ we find that~$k_{\ast}\asymp
100^{d}$ in the case of a moderate noise level $\delta=10^{-2}$. Such values $k_{\ast}$, however, are not feasible for large dimensions.
Hence the problem is difficult (intractable), even for moderate
  values of~$\delta>0$,  when the dimension $d$ is large enough.
\end{xmpl}

\section{Tractability}
\label{sec:tractability}

Following the
monographs~\cite{MR2455266}--\cite{MR2987170}, we will now suggest a definition that tries to formalize the notion of `difficulty' that we have highlighted in the
Examples~\ref{xmpl:mild} and~\ref{xmpl:sobolev}.

\subsection{Tractability of families of ill-posed equations}
\label{sec:tratcatbility-inverse}

We want to capture both, difficulties for small level~$\delta$, as
well as difficulties for large spatial dimensions~$d$.
In order to capture the dimensionality~$d$ of the problems, the concept
of tractability is based on the following construction: For~$d\in\nat$
we introduce a family of linear (multivariate) problems by considering an associated family of operator equations~\eqref{eq:opeq} with compact linear operators $A:=A_{d}$.
Having such a family $A_{d}\;(d \in\nat)$, with corresponding singular
values~$s_{j}(A_{d})$ fixed, the
following definition seems to be appropriate, for the index~$\kast =
\kast(\delta,d)$ from~(\ref{eq:kast}).

\begin{de}   [weak tractability] \label{def:trintr}
  We call the family of operator equations \eqref{eq:opeq} with compact linear operators $A:=A_{d}\: (d \in\nat)$ {\sl weakly tractable} if we have
  \begin{equation} \label{eq:Qlimit}
\lim_{d + 1/\delta\to \infty} Q(\delta,d) = 0 \quad \mbox{for} \quad Q(\delta,d):= \frac{\log(k_{\ast}(\delta,d))}{d +
  1/\delta}\,.
  \end{equation}
Otherwise we call it \emph{intractable}.
\end{de}
\begin{rem}
  The above limit~$d + 1/\delta\to\infty$ means that for each
  subsequence~$\{(\delta_{k},d_{k})\}_{k=1}^\infty$ with~$d_{k} +
  1/\delta_{k}\to\infty$ the  quotient $Q(\delta_k,d_{k})$ tends to zero as $k \to \infty$.
If there are subsequences such that the quotient  $Q(\delta_k,d_{k})$ is bounded away from zero, then intractability is seen.

  In
  particular, intractability (non-vanishing limit~$Q$) may occur if either
  \begin{enumerate}
  \item[(I)] $d \in \nat$ is fixed and~$\delta \to 0$ (intractability in~$\delta$), or
  \item[(II)] $\delta>0$ is fixed and~$d\to\infty$ (intractability in~$d$).
       \end{enumerate}
    By definition, a problem is intractable if~$k_{\ast}$ is exponential in~$d$
    \emph{or}~$1/\delta$. Of course, often a sequence
of problems may be intractable in both factors, simultaneously.
\end{rem}
\begin{rem}
  Formally, we just have a family of problems with the corresponding
  sequence of compact operators $A_{d}\;(d\in\nat)$, and
  these do not need to have anything in common. In order to interpret
  the tractability vs.~intractability, ``the reader must be convinced that
  the definition of~$A_{d}\;(d \in \nat)$  properly models the same
  problem for varying dimensions''
  (see~\cite[p.~101]{MR1263379}). This is the case for the problem
  studied here.
\end{rem}
With Definition~\ref{def:trintr} at hand, we may rephrase that the problem family of
Example~\ref{xmpl:mild} is intractable in~$\delta$, whereas the
problem family in Example~\ref{xmpl:sobolev} is intractable in~$d$.
\begin{rem}
We add that there is a zoo of modifications of the notion of
tractability, often emphasizing power-type behavior, or other special
features, see ~\cite[Chapt.~8]{MR2455266}. Here we constrain to the weakest notion of tractability.
\end{rem}

\subsection{Relation to tractability of direct problems}
\label{sec:tractability-direct}

We now relate the tractability for ill-posed equations to the one as
studied in Information-based Complexity.

\pmathe{We recall that initially we are given an operator
  equation~$A\colon X \to Y$ as in~(\ref{eq:opeq}). The goal
  was to solve the \emph{inverse problem} under the knowledge that the
  unknown solution belongs to the set~$H_{\varphi}$ from
  Definition~\ref{de:smoothness}. Then we turned to
  families~$A_{d}\colon X(d) \to Y(d)\ (d\in\nat)$ of such equations
  in order to treat the tractability for families of inverse problems.}

\pmathe{In Information-based Complexity one typically also starts with
such operator equation as in~(\ref{eq:opeq}). However, the goal is to
approximately construct~$A x,\ x\in F\subset X$, where~$F$ is the set
of problem elements. The construction is based on
information~$N_{n}(x)$, similarly as in~(\ref{eq:Nn}), and with error
measured in~$Y$.  In this context
the mapping~$A\colon F \to Y$ is called \emph{solution operator},
see~\cite[Chapt.~4]{MR2455266}. 
Typically, the set~$F:= B_{X}$ is the unit ball in~$X$. We agree to call this the \emph{direct
  problem}.
Again,
tractability is then defined for families of such direct problems.
}

\pmathe{
  Here we shall construct a ~\emph{companion problem} ( `direct' in the sense of
  Informa\-tion-based Complexity) to the inverse problem, taking into
  account that the set~$B_{X}$ of problem elements coincides
  with~$H_{\varphi}$ from Definition~\ref{de:smoothness}. 
}

The set~$F:= H_{\varphi} \subset X$ of
problem elements, is
seen to be the unit ball in the Hilbert space~$X_{\varphi}$, given as
\begin{equation}
  \label{eq:H-phi}
  X_{\varphi} := \set{x \in \ker^{\perp}(A),\quad x=\varphi(A^{\ast}A)v,\ \norm{v}{X} <\infty},
\end{equation}
endowed with the natural scalar product
\begin{equation}
  \label{eq:scalar}
  \scalar{x}{y}_{X_{\varphi}}:= \scalar{v}{w}_{X},
\end{equation}
where the elements~$v,w$ are the unique source elements,
because~$X_{\varphi}\subset \ker^{\perp}(A)$ is restricted to the orthogonal
complement to the kernel of~$A$. Actually, the spaces~$X_{\varphi}$
  with~$\varphi$ being index functions generate scales of Hilbert
  spaces, and we refer to the study~\cite{MR1984890}.

\pmathe{We attach to the inverse problem the companion problem,
  written as~$A^{\varphi}\colon
X_{\varphi}\to Y$, to distinguish from the direct problem~$A \colon X
\to Y$}. Given~$x\in H_{\varphi}$, the goal is to
approximately compute~$A^{\varphi} x\in Y$. The error for any algorithm~$R_{k} =
\psi(N_{k}(x))\colon X\to Y$, using at most information of
cardinality~$k$,
is given as
\begin{equation}
  \label{eq:error-ibc}
  e(A^{\varphi},R_{k}):= \sup_{x\in H_{\varphi}}\norm{A^{\varphi} x - R_{k}(x)}{Y}.
\end{equation}
\pmathe{Suppose that we have a family~$A_{d}^{\varphi}\colon X_{\varphi}(d)
\to Y(d)\; (d\in\nat)$ at hand, and we stress that the unit balls also
depend on~$d$, such that we have~$H_{\varphi}(d)$.} For  this absolute error criterion the (weak) tractability is derived
from the information complexity~$n(\varepsilon,d)$, given as
\begin{equation}
  \label{eq:ned}
  n(\varepsilon,d) := \max\set{k,\quad \inf_{R_{k}}\;e(A_{d}^{\varphi},R_{k}) > \varepsilon }.
\end{equation}
(We tentatively assume that~$\varepsilon>0$ is small enough, such
that~$\varepsilon < \norm{A_{d}}{X_{\varphi}\to Y}$.)
According to~\cite[~\S~4.4.2]{MR2455266}  the problem is (weakly)
tractable if
\begin{equation}
  \lim_{d + 1/\varepsilon\to\infty} \frac{\log(n(\varepsilon,d))}{d +
    1/\varepsilon} =0,
\end{equation}
otherwise it is intractable. For the companion
problems~$A_{d}^{\varphi}$ the information
complexity~$n(\varepsilon,d)$ is characterized by the singular
values~$s_{d,k},\ k\in\nat$
of the maps~$A_{d}^{\varphi}\colon X_{\varphi}(d)\to Y(d)$ via
\begin{equation}
  \label{eq:ned-ld}
  n(\varepsilon,d) =
\max\set{k,\quad s_{d,k}>\varepsilon}.
\end{equation}
\pmathe{The main observation is that the family of inverse problems and the
family of companion problems share the same tractability behavior.}
\begin{thm}\label{prop:inverse-direct}
  Suppose that we have a problem family~$A_{d}\; (d\in\nat)$
  with~$\rk(A_{d})=\infty$ and
  SVD $(\tilde s_{d,k}, u_{d,k},v_{d,k}),\ k\in\nat$, with smoothness given as in
  Definition~\ref{de:smoothness}.
Then we have for~$\kast(\delta,d)$ from~(\ref{eq:kast}) the identity
$$
 \kast(\delta,d) = n(\delta,d).
 $$
 Thus, the ill-posed problem family~$A_{d}\; (d\in\nat)$ is tractable
 if and only if the family of companion problems~$A_{d}^{\varphi}\ (d\in\nat)$ is tractable.
\end{thm}
\begin{proof}
  We need to find the SVDs of the
  maps~$A_{d}^{\varphi}\colon X_{\varphi}(d)\to Y(d)$. Knowing the SVD of~$A_{d}\colon
  X(d)\to Y(d)$ we argue as follows. For every~$d\in\nat$, and corresponding
  SVD~$(\tilde s_{d,j},u_{d,j},v_{d,j}),\
  j=1,2,\dots$ of the map~$A_{d}$, we see that
  \begin{equation}
    \label{eq:svd-Ad}
    A_{d} \varphi(A_{d}^{\ast}A_{d}) u_{d,j} =
    \tilde{s}_{d,j}\varphi(\tilde s_{d,j}^{2}) v_{d,j},\quad j\in\nat.
  \end{equation}
  Consequently, the map~$A_{d}^{\varphi}\colon X_{\varphi}(d)\to Y(d)$ has the
  following SVD, namely, in terms of  the companion
  function~$\Theta$ to~$\varphi$,
  $$
  \lr{\Theta(\tilde{s}^{2}_{d,j}), \varphi(A^{\ast}A) u_{d,j},v_{d,j}},\ j\in\nat.
  $$
  Notice, that by construction, for every~$d\in\nat$ the
  elements~$\varphi(A_{d}^{\ast}A_{d}) u_{d,j}$
  form an orthonormal basis in~$X_{\varphi}$. Thus,  we see
  that
  \begin{equation}
    \label{eq:n-kast}
     n(\delta,d)
  = \max\set{k,\quad \Theta(\tilde{s}_{d,k}^{2})>\delta}
  = \kast(\delta,d).
  \end{equation}
  The proof is complete.
\end{proof}
\pmathe{Above we have established a one-to-one correspondence between the
tractability of \emph{inverse problems} with smoothness given as
in Definition~\ref{de:smoothness}, and families of \emph{companion problems}~$A_{d}^{\varphi}\colon
X_{\varphi}(d)\to Y(d)$.
As a counterpart we have the corresponding
family~$A_{d}\colon X(d) \to Y(d)$ of \emph{direct problems (in the sense of
Information-based Complexity)}.
These families of problems are related as follows.}
\begin{cor}\label{cor:inverse-direct}
  Suppose that we have a family~$A_{d}\; (d\in\nat)$ of operator equations. The following holds true.
  \begin{enumerate}
  \item If the family of direct problems~$A_{d}\colon X(d)\to Y(d)$ is \emph{tractable}, then this
    holds true for the corresponding family~$A_{d}^{\varphi}\colon
    X_{\varphi}(d)\to Y(d)$. Hence, the family of inverse problems is
    tractable regardless of the smoothness given in
    Definition~\ref{de:smoothness}.
    \item If there is smoothness as in Definition~\ref{de:smoothness} with index function~$\varphi$, for which
      the family of inverse problems is intractable, then so is the
      family of companion problems~$A_{d}^{\varphi}\colon
      X_{\varphi}(d) \to Y(d)$. Consequently, the
      family of direct problems~$A_{d}\colon X(d)\to Y(d)$ is \emph{intractable}.
  \end{enumerate}
\end{cor}
\begin{proof}
  [Sketch of the proof]
  Let us temporarily abbreviate~$k^{\varphi}_{\ast}(\delta,d)=
  k_{\ast}(\delta,d)$ to highlight the dependency on the smoothness,
  given in terms of the index function~$\varphi$. Furthermore, we
  confine the the case  when~$k^{\varphi}_{\ast}(\delta,d)\to\infty$
  as~$d + 1/\delta\to\infty$. Hence, if~$d + 1/\delta$ is large
  enough, we may assume
  that~$\varphi(\tilde s_{d,k^{\varphi}_{\ast}}^{2})\leq 1$. In this case we
  learn from~(\ref{eq:n-kast}) that~
    $$
    k^{\varphi}_{\ast}(\delta,d) =  \max\set{k,\ 
      \Theta(\tilde{s}_{d,k}^{2})>\delta}
    \leq  \max\set{k,\ \tilde{s}_{d,k}^{2}>\delta} = n(\delta,d).
    $$
  The assertions of the corollary are a direct consequence of this relation.
\end{proof}
\begin{rem}
  A less technical argument is as follows. \pmathe{By construction, we have
  the continuous embedding~$X_{\varphi}(d) \hookrightarrow X(d)$.} After rescaling we may
  assume that~$H_{\varphi}(d) \subseteq B_{X(d)}$, where~$B_{X(d)}$ is the unit ball in
  X(d). Thus, if the family is tractable on~$B_{X(d)}$ then it will also
  be tractable on~$H_{\varphi}(d)$, which corresponds to the
  \pmathe{tractability of the} family of
  inverse problems.  Similarly, if for some~$\varphi$ the
 \pmathe{ family of inverse problems} will be intractable, then the family of direct problems
  \pmathe{(on~$B_{X(d)}\supset H_{\varphi}$)} will also be intractable.
\end{rem}

\medskip
We close this section with the following observation. Often it may be
hard to get grip on the exact value for~$\kast$. Clearly,
tractability/intractability is an asymptotic
property. If~$\kast(\delta,d)$ is uniformly bounded then tractability
is clearly seen. So, the interesting case is
when~$\kast(\delta,d)\to\infty$ as~$d + 1/\delta\to\infty.$
Then the
following auxiliary result can be used.

\begin{prop}\label{prop:asymptotics}
  Let~$a >0$ and~$b\in\real$. The following holds true,
  whenever~$a\kast +b$ is positive.
  \begin{enumerate}
  \item[(i)] The problem is tractable if and only if
    $$
    \lim_{d + 1/\delta\to\infty}\frac{\log \lr{a k_{\ast}(\delta,d) + b}}{d + 1/\delta} = 0.
    $$
  \item[(ii)] If along a subsequence $\{(\delta_{k},d_{k})\}_{k=1}^\infty$ we
    have that
    $$Q(\delta_{k},d_{k})\geq \underbar c >0\,,$$
    then   $\;\frac{\log \lr{a k_{\ast}(\delta,d) + b}}{d + 1/\delta}\geq
    \underbar c/2>0\;$ for $\;d_{k} + 1/\delta_{k}\;$ large
    enough. Hence the problem family is intractable.
  \end{enumerate}
\end{prop}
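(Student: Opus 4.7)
The proof plan rests on the elementary identity
$$
\log(a k_{\ast} + b) = \log k_{\ast} + \log\!\lr{a + \frac{b}{k_{\ast}}},
$$
valid whenever $k_{\ast}\geq 1$ and $a k_{\ast} + b >0$. The correction term $\log(a + b/k_{\ast})$ is bounded on any range where $k_{\ast}$ is bounded away from zero, and tends to the finite limit $\log a$ as $k_{\ast}\to\infty$. Hence, upon dividing by $d + 1/\delta$, this correction becomes asymptotically negligible. The whole proposition should then follow by isolating two cases according to whether $k_{\ast}(\delta,d)$ stays bounded or tends to infinity along the relevant sequences.

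For part~(i), I would fix an arbitrary sequence $\{(\delta_{k},d_{k})\}$ with $d_{k} + 1/\delta_{k}\to\infty$ and distinguish two scenarios. If $k_{\ast}(\delta_{k},d_{k})$ remains bounded along a subsequence, then $\log k_{\ast}$ and $\log(a k_{\ast}+b)$ are both bounded there (using the standing assumption $a k_{\ast}+b>0$ to keep the logarithm finite), so both ratios vanish in the limit. If instead $k_{\ast}\to\infty$ along a subsequence, the identity above yields
$$
\frac{\log(a k_{\ast}+b)}{d + 1/\delta} = \frac{\log k_{\ast}}{d + 1/\delta} + \frac{\log(a + b/k_{\ast})}{d + 1/\delta},
$$
where the second summand tends to $0$ because its numerator converges to $\log a$ while the denominator diverges. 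Thus the two ratios share the same limit behavior along every subsequence, proving the equivalence in~(i).

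For part~(ii), the hypothesis $Q(\delta_{k},d_{k})\geq \underline c>0$ together with $d_{k} + 1/\delta_{k}\to\infty$ forces $\log k_{\ast}(\delta_{k},d_{k})\to\infty$, hence $k_{\ast}(\delta_{k},d_{k})\to\infty$. Applying the identity again, the correction term $\log(a + b/k_{\ast}(\delta_{k},d_{k}))/(d_{k} + 1/\delta_{k})$ is $o(1)$, so
$$
\frac{\log(a k_{\ast}(\delta_{k},d_{k}) + b)}{d_{k} + 1/\delta_{k}} \;=\; Q(\delta_{k},d_{k}) + o(1) \;\geq\; \underline c + o(1),
$$
which exceeds $\underline c/2$ once $d_{k} + 1/\delta_{k}$ is sufficiently large. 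Combined with~(i), this shows the modified expression does not vanish along this subsequence, so the problem family is intractable.

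The only real subtlety is bookkeeping: one must handle the case where $k_{\ast}$ does not blow up (where the correction term is harmless because $\log(a k_{\ast}+b)$ is bounded), and the case where $a=1$ with $b<0$ (where naive bounds like $a k_{\ast}+b\geq k_{\ast}$ fail, but the logarithmic identity still works verbatim once $k_{\ast}$ is large enough to ensure $a+b/k_{\ast}$ stays in a neighborhood of $a$). No further analytic input is required.
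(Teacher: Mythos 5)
Your proof is correct and takes essentially the same route as the paper's own (sketched) argument: both decompose $\log(a k_{\ast}+b)$ as $\log k_{\ast}$ plus a bounded correction term (your $\log(a+b/k_{\ast})$ equals the paper's $\log a + \log(1+b/(a k_{\ast}))$) and observe that this correction, divided by $d+1/\delta$, vanishes in the limit. Your version merely supplies the case bookkeeping (bounded versus unbounded $k_{\ast}$, and the deduction $k_{\ast}\to\infty$ in part (ii)) that the paper leaves implicit.
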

\begin{proof}
  [Sketch of a proof] The analysis is simple, and we just hint that
  $$
  \frac{\log \lr{a k_{\ast} + b}}{d + 1/\delta}
  =  \frac{\log \lr{a k_{\ast}\lr{1  + \frac b {a\kast}}}}{d + 1/\delta}
  = \frac{\log(\kast)}{d + 1/\delta} + \frac{\log(a)}{d + 1/\delta}
  + \frac{\log\lr{1  + \frac b {a\kast}}}{d + 1/\delta}
  $$
  The last two terms on the right tend to zero as~$d +
  1/\delta\to\infty$, and hence have no impact on the asymptotics.
\end{proof}
\section{Power-type decay of the singular values}
\label{sec:power-type}

Here we discuss in more detail another example, showing that the joint
limit~$d + 1/\delta\to \infty$ may be crucial. We assume a power-type decay
of the singular values of the operator family~$A_{d}\;(d\in\nat)$. Specifically we assume that for some
power~$a>0$, and a leading term\footnote{We chose the parametrization
  in terms of~$1/d$, because then the constant~$\cd$ may be decreasing
with increasing dimension~$d$.}~$\cd$ there are constants~$0 < m \leq
M <\infty$ such that we have
\begin{equation}
  \label{eq:cd-power}
   m \;\cd j^{-a/d} \leq  s_{j}(A_{d}) \leq M \;\cd j^{-a/d}\qquad (j\in\nat).
 \end{equation}

 For the tractability analysis the following observation is useful:
Looking at the definition of~$\kast$ in~(\ref{eq:kast}), and taking into account the bounds from~(\ref{eq:cd-power}), we see the following:
 \begin{enumerate}
 \item \label{it:kast-low}
   If $j < \lr{\frac{m^{2}\cd^{ 2}}{\Theta^{-1}(\delta)}}^{d/2a}$
   then~$\kast(\delta,d)\geq j$.
   \item \label{it:kast-up} If $j \geq  \lr{\frac{M^{2}\cd^{ 2}}{\Theta^{-1}(\delta)}}^{d/2a}$
   then~$\kast(\delta,d)\leq j$.
 \end{enumerate}
 By taking largest and smallest such values~$j$, respectively, we end
 up with the following two-sided bound.
 \begin{equation}
   \label{eq:kast-two}
 \lr{\frac{m^{2}\cd^{ 2}}{\Theta^{-1}(\delta)}}^{d/2a} - 1  \leq
 \kast(\delta,d) \leq  \lr{\frac{M^{2}\cd^{
       2}}{\Theta^{-1}(\delta)}}^{d/2a} +1.
\end{equation}
Thus, in order to see \emph{tractability} for a given situation, we
need to consider the right hand side, whereas the left hand side will
be used to show \emph{intractability}. In this context, the assertion of
Proposition~\ref{prop:asymptotics} may be used.

We discuss the impact of the behavior of the leading constant~$\cd$
on tractability/intractability. We will distinguish three benchmark situations.
If~$\cd$ is bounded away from zero
as~$d\to\infty$, intractability occurs. If~$\cd$ goes to zero quickly,
at least linear in~$1/d$, then tractability is seen. In the
intermediate cases, specifically if $\cd$ is sublinear in~$1/d$, then
intractability can be seen for low smoothness.
We give details for these cases, next.
\begin{description}
\item[a) $\mathbf{\cd}$ is bounded from below]
In this case the analysis is particularly simple.
\begin{prop}
If~$\cd \geq \underbar c >0$ then the problem family $A_{d}\;(d\in\nat)$ is
intractable.
\end{prop}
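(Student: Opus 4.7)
The plan is to exhibit a subsequence $(\delta_k, d_k)$ with $d_k + 1/\delta_k \to \infty$ along which the quotient $Q(\delta_k, d_k)$ from~\eqref{eq:Qlimit} stays bounded away from zero. Since the hypothesis $\cd \geq \underbar c > 0$ is uniform in $d$, the natural move is to freeze $\delta$ at a small enough value and let $d \to \infty$; no joint scaling of $\delta$ and $d$ is needed.

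First I would invoke the lower bound in~\eqref{eq:kast-two}. Under the assumption $\cd \geq \underbar c$ it yields
$$
\kast(\delta,d) + 1 \;\geq\; \lr{\frac{m^{2}\, \underbar c^{\,2}}{\Theta^{-1}(\delta)}}^{d/2a}
$$
for every $\delta > 0$ and $d \in \nat$. Next I would pick $\delta_{0} > 0$ so small that $\Theta^{-1}(\delta_{0}) < m^{2}\,\underbar c^{\,2}$; such $\delta_{0}$ exists because $\Theta$ is an index function, so $\Theta^{-1}(\delta)\searrow 0$ as $\delta\searrow 0$. Setting $C := m^{2}\,\underbar c^{\,2}/\Theta^{-1}(\delta_{0}) > 1$, this gives $\kast(\delta_{0}, d) + 1 \geq C^{\,d/2a}$ for every $d \in \nat$, so the lower bound grows exponentially in the dimension with a base independent of $d$.

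Finally I would take logarithms and divide by $d + 1/\delta_{0}$: as $d \to \infty$,
$$
\frac{\log\lr{\kast(\delta_{0}, d) + 1}}{d + 1/\delta_{0}} \;\geq\; \frac{(d/2a)\,\log C}{d + 1/\delta_{0}} \;\longrightarrow\; \frac{\log C}{2a} \;>\; 0.
$$
By Proposition~\ref{prop:asymptotics}(i), applied with $a=1$ and $b=1$, the non-vanishing of this modified quotient along the subsequence $\{(\delta_{0}, d)\}_{d\in\nat}$ is equivalent to the failure of weak tractability, so the family is intractable.

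I do not expect any real obstacle here. The only sanity check is the existence of $\delta_{0}$, which is immediate from the definition of an index function, and the fact that $\Theta$ is strictly increasing on a neighborhood of $0$ (so $\Theta^{-1}$ makes sense). The substantive content is that the hypothesis $\cd\geq\underbar c$ makes the base of the exponential in the lower bound for $\kast$ strictly greater than one independently of $d$; consequently $\log \kast$ grows linearly in $d$ and dominates the denominator $d + 1/\delta_{0}$.
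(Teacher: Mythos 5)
Your argument is correct and is essentially the paper's own proof: both rest on the lower bound in~(\ref{eq:kast-two}) with $\delta_{0}$ chosen so that $\Theta^{-1}(\delta_{0}) < m^{2}\underbar c^{2}$, giving exponential growth of $\kast$ in $d$ at fixed $\delta_{0}$, hence intractability in $d$. You merely spell out the limit computation and the appeal to Proposition~\ref{prop:asymptotics} that the paper leaves implicit.
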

\begin{proof}
  If $\cd$ is bounded away from zero as~$d\to\infty$, then the lower
  bound in~(\ref{eq:kast-two}) yields for~$\delta_{0} <
  \Theta(m^{2}\underbar c^{2})$ an exponential increase in~$d$, and
  the problem is \emph{intractable in~$d$}.
\end{proof}
\item[b) $\mathbf{c(1/d)}$ is at least linear
  in~$\mathbf{1/d}$]
  In this case the following is seen.
  \begin{prop}\label{prop:linear}
    If the function~$t \mapsto c(t)$ is  at least linear, i.e.,\ $c(t) \leq \bar c t$ for some constant~$\bar c$, then the
    problem family $A_{d}\;(d\in\nat)$ is tractable.
  \end{prop}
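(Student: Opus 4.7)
The plan is to start from the upper bound in~(\ref{eq:kast-two}) and substitute the hypothesis $c(1/d) \leq \bar c/d$, giving
\[
\kast(\delta,d) \leq V(\delta,d) + 1, \qquad V(\delta,d) := \lr{\frac{M^{2}\bar c^{2}}{d^{2}\,\Theta^{-1}(\delta)}}^{d/(2a)}.
\]
Abbreviate $u := \Theta^{-1}(\delta)$, so that $\delta = \sqrt{u}\,\varphi(u)$. Since $\log\kast \leq \log(V+1)$, it is enough to prove that $\log(V+1)/(d+1/\delta) \to 0$ as $d+1/\delta\to\infty$.

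I would split the analysis into the regimes $V \leq 1$ and $V > 1$. The first, i.e.\ $d^{2}u \geq M^{2}\bar c^{2}$, is immediate: $\log(V+1) \leq \log 2$ and the quotient vanishes. The substantive case is $V>1$, where I would write $\log V = \frac{1}{2a}\, d\,\log(C/d^{2})$ with $C := M^{2}\bar c^{2}/u$, and use the elementary fact that the function $x\mapsto x\log(C/x^{2})$ on $(0,\infty)$ attains its maximum $2\sqrt{C}/e$ at $x=\sqrt{C}/e$. This yields
\[
\log V \leq \frac{\sqrt{C}}{ae} = \frac{M\bar c}{ae\sqrt{u}},
\]
and combined with the crude bound $d+1/\delta \geq 1/\delta = 1/(\sqrt{u}\,\varphi(u))$ produces the key estimate
\[
\frac{\log V}{d+1/\delta} \leq \frac{M\bar c\,\varphi(u)}{ae}.
\]

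It remains to argue that in the regime $V>1$ the assumption $d+1/\delta\to\infty$ forces $u\to 0$, whence $\varphi(u)\to 0$: otherwise, along a subsequence with $u$ bounded below by some $u_{0}>0$, the inequality $d^{2}u < M^{2}\bar c^{2}$ would keep $d$ bounded, and $\delta = \Theta(u) \geq \Theta(u_{0}) > 0$ would keep $1/\delta$ bounded, contradicting $d+1/\delta\to\infty$. The main technical obstacle I anticipate is resisting a naive term-by-term splitting of $\log V$ into the contributions $\log(M^{2}\bar c^{2})$, $-2\log d$ and $\log(1/u)$: since $\log(M^{2}\bar c^{2})$ need not be negative, such a splitting leaves a stubborn term $\mathrm{const}\cdot d/(d+1/\delta)$ that fails to vanish. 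The calculus maximum of $x\log(C/x^{2})$ is precisely the device that bundles the three pieces into a single factor $1/\sqrt{u}$, which is then exactly cancelled by the $\sqrt{u}$ in $1/\delta$.
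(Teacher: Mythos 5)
Your proof is correct, but it takes a genuinely different route from the paper's. The paper splits on the size of the product $\delta d$: when $\delta d$ is bounded below it invokes Lemma~\ref{lem:theta} with $C_{0}=M\bar c/\underline{c}$ to force $M^{2}\bar c^{2}/d^{2}\leq \Theta^{-1}(\delta)$ and hence $\kast\leq 2$ outright; when $\delta d\to 0$ it applies the lemma again to replace $\Theta^{-1}(\delta)$ by $\delta^{2}$ inside the logarithm, arriving at $\frac{\log(\kast-1)}{d+1/\delta}\leq\frac{\delta d}{2a}\log\frac{1}{(\delta d)^{2}}$, and lets the prefactor $\delta d\to 0$ kill the logarithm. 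You instead split on whether the upper bound $V$ exceeds $1$, never use Lemma~\ref{lem:theta}, and in the nontrivial regime maximize the exponent over $d$ via the elementary bound $\sup_{x>0}x\log(C/x^{2})=2\sqrt{C}/e$; the resulting $\mathrm{const}/\sqrt{u}$ is then cancelled exactly by $1/\delta=1/(\sqrt{u}\,\varphi(u))$, leaving the factor $\varphi(u)\to 0$. Both arguments ultimately rest on the same elementary fact about $x\mapsto x\log(\cdot/x^{2})$, but your version decouples $d$ from $\delta$ more cleanly and makes visible that the mechanism is precisely the cancellation of $\sqrt{\Theta^{-1}(\delta)}$ against $\Theta$, whereas the paper's version is organized around the auxiliary quantity $\delta d$. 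One cosmetic point: in the regime $V>1$ you bound $\log V$ but announced you would bound $\log(V+1)$; the harmless extra $\log 2$ from $\log(V+1)\leq\log 2+\log V$ (or an appeal to Proposition~\ref{prop:asymptotics}(i)) should be mentioned to close that loop.
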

  For the proof we need the following simple but important
  observation, which takes into account the special form of the
  function~$\Theta$.
  \begin{lemma}\label{lem:theta}
    For each constant~$C_{0}>0$ there is~$\delta_{0}>0$ such
    that
    $$
    C_{0}^{2}\,\delta^{2} \leq \Theta^{-1}(\delta)\quad \text{whenever}\ 0 < \delta < \delta_{0}.
    $$
  \end{lemma}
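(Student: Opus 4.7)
The plan is to rewrite the target inequality using the monotonicity of $\Theta$ and then exploit the fact that $\varphi$ vanishes at zero. Concretely, since $\Theta(t) = \sqrt{t}\,\varphi(t)$ is a non-decreasing continuous function with $\Theta(t) \to 0$ as $t \searrow 0$, the inequality $C_{0}^{2}\delta^{2} \leq \Theta^{-1}(\delta)$ is equivalent (for $\delta$ small enough that $\Theta^{-1}(\delta)$ makes sense) to
$$
\Theta(C_{0}^{2}\delta^{2}) \leq \delta.
$$
This is the form I would work with.

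Next, I would simply compute the left-hand side. We have
$$
\Theta(C_{0}^{2}\delta^{2}) = \sqrt{C_{0}^{2}\delta^{2}}\;\varphi(C_{0}^{2}\delta^{2}) = C_{0}\,\delta\,\varphi(C_{0}^{2}\delta^{2}),
$$
so the required inequality reduces to
$$
\varphi(C_{0}^{2}\delta^{2}) \leq \frac{1}{C_{0}}.
$$

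Now I would invoke the defining property of an index function: since $\varphi$ is continuous and satisfies $\lim_{t\searrow 0}\varphi(t)=0$, there exists $t_{0}>0$ such that $\varphi(t) \leq 1/C_{0}$ for all $0 < t \leq t_{0}$. Setting $\delta_{0} := \sqrt{t_{0}}/C_{0}$, any $0<\delta<\delta_{0}$ satisfies $C_{0}^{2}\delta^{2}<t_{0}$ and hence $\varphi(C_{0}^{2}\delta^{2}) \leq 1/C_{0}$, which closes the argument.

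There is no real obstacle in this lemma; the only minor subtlety is that $\Theta^{-1}$ may need to be interpreted as a generalized inverse (e.g.\ $\Theta^{-1}(\delta)=\sup\{t:\Theta(t)\leq \delta\}$) if $\Theta$ is not strictly increasing, but the monotonicity-based equivalence used in the first step is still valid. The shape of the argument makes it clear \emph{why} the lemma holds: the factor $\sqrt{t}$ already gives the desired scaling $\delta^{2}$, and the extra factor $\varphi$ provides the small slack $1/C_{0}$ that is needed once $\delta$ is small enough.
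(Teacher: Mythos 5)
Your proof is correct and follows essentially the same route as the paper: reduce the claim to $\Theta(C_{0}^{2}\delta^{2})\leq\delta$, observe this is equivalent to $\varphi(C_{0}^{2}\delta^{2})\leq 1/C_{0}$, and use $\lim_{t\searrow 0}\varphi(t)=0$ to choose $\delta_{0}$. You merely spell out the steps (and the generalized-inverse caveat) that the paper compresses into ``the proof can easily be completed.''
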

  \begin{proof}
    Clearly, for the given smoothness function~$\varphi$ we
    find~$\delta_{0}>0$ such that
    $$
    C_{0}\,\delta\, \varphi(C_{0}^{2}\,\delta^{2}) \leq \delta,\quad
    \text{for}\ 0 < \delta \leq \delta_{0}.
    $$
    By the definition of~$\Theta$ this
    yields~$\Theta\lr{C_{0}^{2}\,\delta^{2}} \leq \delta$ for~$0 <
    \delta \leq \delta_{0}$, and the proof can easily be completed.
  \end{proof}
  \begin{proof}
    [Proof of Proposition~\ref{prop:linear}]
We distinguish two cases, relevant for the analysis. First let us assume that~$\delta d\geq
    \underbar c >0$. Then we apply Lemma~\ref{lem:theta} with~$C_{0}:=
    \frac{M \bar c}{\underbar c}$.  This gives
    $$
    M^{2}\frac{\bar c^{2}}{d^{2}} \leq C_{0}^{2}\,\delta^{2}
    \leq \Theta^{-1}(\delta)\quad \text{for}\ \underbar
      c/d\leq  \delta \leq \delta_{0},
    $$
    which by virtue of~(\ref{eq:kast-two}) implies~$\kast\leq 2$ in this case.

    Next, let us assume that~$\delta d \to 0$, in particular~$\delta\to
    0$. Specifically we may assume ~$\delta d
    \leq \bar c$, and~$\delta \leq \delta_{0}$.
By using the right hand side in~(\ref{eq:kast-two}), we can bound from above as
    $$
    \kast -1  \leq \lr{\frac{M^{2}\cd^{2}}{\Theta^{-1}(\delta)}}^{d/2a} \leq
    \lr{\frac{M^{2}\bar c ^{2}}{d^{2}\Theta^{-1}(\delta)}}^{d/2a} .
    $$
    Lemma~\ref{lem:theta} with $C_{0}:= M \bar c$
    yields $\delta_{0}>0$ such that
    $$
    M^{2}\,\bar c^{2}/\Theta^{-1}(\delta)\leq 1/\delta^{2} \quad(\delta \leq \delta_{0}),
    $$
    and we can estimate as
    \begin{align*}
      \frac{\log (\kast(\delta,d)-1)}{d + 1/\delta}
      & \leq \frac{\delta d}{2a} \log\lr{\frac{M^{2}\bar c
        ^{2}}{d^{2}\,\Theta^{-1}(\delta)}} \leq  \frac{\delta d}{2a}
        \log\lr{\frac 1{(\delta d)^{2}}}\,,
    \end{align*}
    which implies that $\frac{\log (\kast(\delta,d)-1)}{d + 1/\delta}$ tends to zero as $\delta d \to 0$.
    By virtue of Proposition~\ref{prop:asymptotics} the proof is
    complete, and  we have tractability.
  \end{proof}

\item[c) $\mathbf{\cd}$ is sublinear]
\label{sec:cd-sublinear}
  This case exhibits an interesting feature. To be precise we assume
  here that the function~$1/d\to \cd$ is the restriction of a strictly
  increasing continuous sublinear\footnote{An
      index function~$f$ is called sublinear if the function~$t
      \mapsto t/f(t)$ is an index function.} index function~$c\colon (0,\infty)\to (0,\infty)$.
  \begin{prop}
    Suppose that the function~$t\to c(t)$ is strictly
      increasing and sublinear. For a
    constant~$C > 1/m^{2}$, with~$m$ from~(\ref{eq:cd-power}), let~$\varphi$ be any index function which satisfies
    \begin{equation}
      \label{eq:indexlow}
      \varphi(t) \geq \frac{c^{-1}(\sqrt{Ct})}{\sqrt t}\quad (0 < t
      \leq 1/C).
    \end{equation}

    Then the problem family is intractable for this smoothness~$\varphi$.
  \end{prop}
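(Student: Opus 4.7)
The plan is to exhibit a subsequence $(\delta_d, d)_{d\in\nat}$ along which the lower bound in~(\ref{eq:kast-two}) forces $k_\ast(\delta_d,d)$ to grow exponentially in $d$, and then to invoke Proposition~\ref{prop:asymptotics}(ii) to conclude intractability.

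First, I would rewrite the smoothness hypothesis~(\ref{eq:indexlow}) in a form that is directly comparable with $\Theta^{-1}$. Multiplying by $\sqrt t$ gives
\[
\Theta(t) = \sqrt t\,\varphi(t) \geq c^{-1}\lr{\sqrt{C\,t}},\qquad t>0,
\]
which, upon inverting (monotonicity of $c$ is used here, and this is the main technical point to justify carefully), yields
\[
\Theta^{-1}(\delta) \leq \frac{c(\delta)^{2}}{C}\qquad \text{for all sufficiently small } \delta>0.
\]
Thus along any choice of noise level we control $\Theta^{-1}$ by the leading constant function $c$ at the same argument.

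Next I would choose the diagonal subsequence $\delta_d := 1/d$, so that $d + 1/\delta_d = 2d \to\infty$, and the argument of $c$ on both sides of the previous inequality coincides with the argument in the leading term~$c(1/d)$ of~(\ref{eq:cd-power}). Plugging $\Theta^{-1}(1/d) \leq c(1/d)^2/C$ into the left-hand side of~(\ref{eq:kast-two}) gives
\[
k_\ast(1/d, d) + 1 \;\geq\; \lr{\frac{m^{2}\, c(1/d)^{2}}{\Theta^{-1}(1/d)}}^{d/2a} \;\geq\; \lr{m^{2}\,C}^{d/2a}.
\]
Because $C > 1/m^{2}$, the base $m^{2}C$ strictly exceeds~$1$, so the right-hand side grows exponentially in~$d$.

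Finally, taking logarithms and dividing by $d + 1/\delta_d = 2d$, I obtain
\[
\frac{\log\lr{k_\ast(1/d,d) + 1}}{d + 1/\delta_d} \;\geq\; \frac{\log(m^{2}C)}{4a} \;>\; 0.
\]
An application of Proposition~\ref{prop:asymptotics}(ii) with $a=1$ and $b=1$ (in the notation of that proposition) then transfers this lower bound to $Q(\delta_d,d)$ for $d$ large enough, which by Definition~\ref{def:trintr} proves intractability. The main obstacle is the inversion step: one has to make sure that $c$ is genuinely invertible on the relevant range and that $\delta_d = 1/d$ is eventually small enough for the inequality $\Theta^{-1}(\delta) \leq c(\delta)^2/C$ to hold; the sublinearity assumption on~$c$ (that $t/c(t)$ is an index function) and continuity/monotonicity of $\varphi$ together guarantee this for $d$ large.
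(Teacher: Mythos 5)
Your proof is correct and follows essentially the same route as the paper: the same diagonal subsequence $\delta_d=1/d$, the same translation of~(\ref{eq:indexlow}) into $m^2 c(1/d)^2/\Theta^{-1}(1/d)\geq Cm^2>1$, and the same use of the lower bound in~(\ref{eq:kast-two}) followed by Proposition~\ref{prop:asymptotics}. One small slip: the final step needs part~(i) of Proposition~\ref{prop:asymptotics} (the equivalence), not part~(ii), since you are passing from a lower bound on $\log(k_\ast+1)/(d+1/\delta)$ to intractability rather than the other way around.
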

  \begin{proof}
    First notice, that the right hand side constitutes an index
    function, since with~$s:= c^{-1}(\sqrt{Ct})$ we see that
    $$ \frac{c^{-1}(\sqrt{Ct})}{\sqrt t} = \sqrt C\, \frac{s}{ c(s)},
    $$
    and the sublinearity of~$c$ applies. 

    If the function~$\varphi$ obeys~(\ref{eq:indexlow}), then with
    letting~$t:= \frac{\cd^{2}}{C}$ we find that
    $$
    \Theta\lr{\frac{\cd^{2}}{C}} \geq \frac 1 d,
    $$
    and hence that
    $$
    \cd^{2} \geq C \, \Theta^{-1}\!\left(\frac{1}{d}\right),
    $$
    which implies
    \begin{equation}
      \label{eq:ct-quotient}
      \frac{m^{2}\cd^{2}}{\Theta^{-1}\!\left(\frac{1}{d}\right)} \geq C {m^{2}} >1\quad (d\in\nat).
    \end{equation}
    Consequently, for the sequence~$(\delta(d),d)$
    with~$\delta(d)=1/d$ we can see
    that~$\kast(1/d,d) +1$ is growing exponentially with $d$,
    because the associated constant~$C m^{2}$ is greater than one. By using Proposition~\ref{prop:asymptotics}, this shows intractability under the
    given smoothness~$\varphi$ from~(\ref{eq:indexlow}).
  \end{proof}
  \begin{xmpl}
    Let us consider the case when~$c(t) =t^{q}$ for some~$0< q < 1$,
    to maintain sublinearity. Then the benchmark smoothness on the
    right in~(\ref{eq:indexlow}) is seen to be~$t \to
    t^{\frac{1-q}{2q}}$. This highlights that we need to have low
    smoothness if~$q$ is close to one, whereas smoothness can be
    arbitrarily large when~$q$ tends to zero.
  \end{xmpl}

\end{description}

\section{Multivariate integration operator}
\label{sec:integration}

The current study was inspired by the investigations in~\cite{HF23} for the operator of $d$-variate
mixed integration $A_{d}\colon L^{2}(0,1)^{d} \to L^{2}(0,1)^{d}\;(d~\in~\nat)$, which is defined
for~$0 < s_{1},\dots,s_{d} < 1$ as
\begin{equation}
  \label{eq:berndmult}
  (A_{d}x)(s_{1},\dots,s_{d}) := \int_{0}^{s_{d}}\dots\int_{0}^{s_{1}}
x(t_{1},\dots,t_{d})\; d t_{1}\cdots dt_{d}.
\end{equation}
It was shown in~\cite{HF23}, and it was evaluated in a
different context also in~\cite{KL02},  that the singular values behave as
\begin{equation}
  \label{eq:sn-multi}
s_{j}(A_d) \asymp C(d) \,\frac{\log^{d-1}(j)}{j} \quad \text{as}
\quad j\to\infty.
\end{equation}

    \begin{rem}
    Formulas of the form~\eqref{eq:sn-multi} possessing the associated
  asymptotics as $j \to \infty$ occur in different applications. For
  example, we have such formulas in~\cite{Schmeisser07} for
  characterizing the $j$-th entropy numbers of an embedding operator
  from Sobolev spaces of dominating mixed smoothness to
  $L^2(0,1)^{d}$, and in~\cite{Tem18} for the Kolmogorov $j$-th-width
  in case of periodic functions over the $d$-dimensional torus
  $\mathbb{T}^d$. Within the present context, the study~\cite{KSU15} is most important.
  These authors analyze approximation numbers (coinciding with singular
  values) of Sobolev embeddings over the $d$-dimensional torus
  $\mathbb{T}^d$.  So, different behavior of~$C(d)$ under the
  auspices of the formula~\eqref{eq:sn-multi} can be seen in the literature.
\end{rem}
The major point in~\cite{HF23} was to stress that the degree of ill-posedness
of the operators $A_d$ from~\eqref{eq:berndmult} with the asymptotics~\eqref{eq:sn-multi} for the singular
values proves to be {\sl one}, regardless of the spatial dimension
$d$. Here, the degree of ill-posedness measures the power-type decay
of the singular numbers of the operators~$A_d$, and we have that
$$
\liminf_{j\to\infty}
\frac{-\log(s_{j}(A_{d}))}{\log(j)}=
\limsup_{j\to\infty}
\frac{-\log(s_{j}(A_{d}))}{\log(j)}
=\lim_{j\to\infty}
\frac{-\log(s_{j}(A_{d}))}{\log(j)}
=1,
$$
see~\cite[Def.~1.1]{HF23}.

  However, such a family $A_{d}\;(d\in\nat)$ of problems may be intractable. Then the
  degree of ill-posedness does not necessarily reflect the difficulty
  for solving the related ill-posed problem, and the decision between tractability and intractability is influenced by the
  behavior of the leading constant~$C(d)$.

   If $C(d) \ge \underbar c >0$, then the problem is obviously
   intractable in $d$, because the function
   $f(t)=\frac{\log^{d-1}(t)}{t}\;(t \ge 1)$ is growing for $t \in
   [1,e^{d-1}]$ and thus $\log(k_*(\delta,d)) \ge d-1$ for
   sufficiently small $\delta>0$.

   The situation may change when~$C(d)$ tends to zero as~$d \to \infty$.
    To this end, let us assume smoothness given by any index function~$\varphi$ with related
  companion~$\Theta$, as described in
  Definition~\ref{de:smoothness}. The following result is relevant.

  \begin{prop} \label{prop:speed}
    Suppose that, for a family~$A_{d}\;(d\in\nat)$ of compact operators obeying a singular value behavior of the form \eqref{eq:sn-multi}, there is a
    constant~$c_{0}>0$ such that
    $C(d) \ge c_{0}\,\lr{\frac e{d-1}}^{d-1}\;(d=2,3,...)$,
    and hence there is some constant~$\underbar c>0$, for which the singular
    values~$s_{j}(A_{d})$ are bounded from below by
   \begin{equation}
      \label{eq:sn-below}
      s_{j}(A_{d}) \geq \underbar c \lr{\frac e
        {d-1}}^{d-1}\frac{\log^{d-1}(j)}{j}\quad (j,d=2,3,\dots)\,.
    \end{equation}
    Then the family~$A_{d}\;(d\in\nat)$ is intractable in~$d$.
  \end{prop}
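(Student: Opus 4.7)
The plan is to identify a dimension-dependent index $j_{d}$, growing exponentially in $d$, for which the lower bound on $s_{j_{d}}(A_{d})$ remains bounded away from zero; then monotonicity of singular values propagates this bound down to all smaller indices, and definition~\eqref{eq:kast} forces $\kast(\delta,d)\ge j_{d}$ once $\delta$ is below a fixed threshold.

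First I would analyze the elementary function $f_{d}(t)=\log^{d-1}(t)/t$ for $t\ge 1$. Differentiating shows that $f_{d}$ attains its maximum at $t=e^{d-1}$, with maximum value
$$
f_{d}\lr{e^{d-1}} = \lr{\frac{d-1}{e}}^{d-1}.
$$
Thus, plugging $j_{d}:=\lfloor e^{d-1}\rfloor$ into the assumed lower bound~\eqref{eq:sn-below} yields, up to an $o(1)$ correction coming from the floor function,
$$
s_{j_{d}}(A_{d}) \;\geq\; \underbar c\,\lr{\frac{e}{d-1}}^{d-1}\cdot \frac{(d-1-\varepsilon_{d})^{d-1}}{e^{d-1}} \;\geq\; \frac{\underbar c}{2}
$$
for all $d$ large enough, where $\varepsilon_{d}=O(e^{-(d-1)})$ accounts for the rounding. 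By monotonicity of singular values we therefore get $s_{j}(A_{d})\geq \underbar c /2$ for every $j\leq j_{d}$.

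Next I would fix $\delta_{0}>0$ small enough so that $\delta_{0} < \Theta\lr{(\underbar c/2)^{2}}$. Monotonicity of $\Theta$ then gives, for every $j\leq j_{d}$,
$$
\Theta\lr{s_{j}^{2}(A_{d})} \;\geq\; \Theta\lr{(\underbar c/2)^{2}} \;>\; \delta_{0},
$$
and so by the defining formula~\eqref{eq:kast} we obtain $\kast(\delta_{0},d)\ge j_{d}=\lfloor e^{d-1}\rfloor$. Consequently $\log \kast(\delta_{0},d)\geq d-2$ for all $d$ large enough, and
$$
Q(\delta_{0},d) \;=\; \frac{\log \kast(\delta_{0},d)}{d + 1/\delta_{0}} \;\geq\; \frac{d-2}{d+1/\delta_{0}} \;\longrightarrow\; 1 \quad (d\to\infty).
$$
Thus the limit condition~\eqref{eq:Qlimit} fails along the subsequence $(\delta_{0},d)$, which by Definition~\ref{def:trintr} is exactly intractability in~$d$.

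The calculation is essentially self-contained; the only delicate point is verifying that the $(1-\varepsilon_{d}/(d-1))^{d-1}$ factor produced by replacing $e^{d-1}$ by its integer part stays bounded below by a positive constant (say $1/2$) uniformly for large~$d$. This is a routine estimate, since $\varepsilon_{d}$ decays exponentially in~$d$ whereas the exponent is only~$d-1$, so no real obstacle is expected. If one prefers to avoid this altogether, one can appeal to Proposition~\ref{prop:asymptotics}(ii) and work with $\kast+1$ in place of $\kast$, further simplifying the bookkeeping.
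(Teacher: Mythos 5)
Your argument is correct and follows essentially the same route as the paper's proof: evaluate the lower bound \eqref{eq:sn-below} at an index near the maximizer $t=e^{d-1}$ of $t\mapsto \log^{d-1}(t)/t$, conclude $s_{j}(A_{d})\ge \underbar c/2$ there, fix $\delta_{0}$ with $\Theta^{-1}(\delta_{0})<(\underbar c/2)^{2}$, and deduce $\kast(\delta_{0},d)\gtrsim e^{d-1}$, so that $Q(\delta_{0},d)$ stays bounded away from zero. The only cosmetic difference is that the paper takes $\lceil e^{d-1}\rceil$ (which makes $\log^{d-1}(l)\ge (d-1)^{d-1}$ immediate and avoids your $\varepsilon_{d}$ bookkeeping), and that your appeal to monotonicity of the singular values is not needed, since a single index $l$ with $\Theta(s_{l}^{2})>\delta_{0}$ already gives $\kast\ge l$ by \eqref{eq:kast}.
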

  \begin{proof}
    By the definition of~$k_{\ast}$ in (\ref{eq:kast}) we can argue as
    follows: If for an index~$l$ we see that~$s_{l}^{2}(A_{d})>
    \Theta^{-1}(\delta)$ then~$k_{\ast}(\delta,d)\geq l$.
    We let~$l:= \lceil e^{d-1}\rceil$, the smallest integer
      larger than~$e^{d-1}$, and we fix some~$\delta_{0}\leq
    1/2$ such that~$4 \Theta^{-1}(\delta_{0}) < \underbar c$.
    Then, for~$d\geq 2$, we can bound
    \begin{align*}
      s_{l}(A_{d}) & \geq \underbar c \lr{\frac e
                     {d-1}}^{d-1}\frac{\log^{d-1}(l)}{l} \\
      &\geq \underbar c \lr{\frac e
                     {d-1}}^{d-1} \frac{\log^{d-1}(e^{d-1})}{e^{d-1}
        +1}\\
      &\geq \frac{\underbar c}{2}  \lr{\frac e {d-1}}^{d-1}
        \lr{\frac  {d-1} e}^{d-1} = \frac{\underbar c}{2}  > \sqrt{\Theta^{-1}(\delta_{0})}.
    \end{align*}
    Therefore~$s_{l}^{2}(A_{d}) >  \Theta^{-1}(\delta_{0})$, and
    hence~$k_{\ast}(d,\delta_{0}) \geq e^{d-1}$. But then, for~$d\geq
    1/\delta_{0}\geq 2$ we see that
    $$
 \frac{\log (k_{\ast}(d,\delta_{0}))}{d + 1/\delta_{0}} \geq
 \frac{d-1}{2d} \geq 1/4,
 $$
 such that the family~$A_{d}\;(d\in\nat)$ of operators is intractable in~$d$.
\end{proof}

Now we return to the family of multivariate integration operators from~\eqref{eq:berndmult}, and
we will show that this class of ill-posed problems is weakly tractable.

  \begin{thm} \label{prop:bernd}
  For the family of compact multivariate integration operators $A_d:L^2(0,1)^d \to L^2(0,1)^d$ defined in \eqref{eq:berndmult} we have for the constant $C(d)$ in \eqref{eq:sn-multi} that
  \begin{equation} \label{eq:CC}
  C(d) \sim \frac{1}{(d-1)!\,\pi^d}\  \left(\asymp \frac{1}{\sqrt{d-1}}\,\lr{\frac e {\pi (d-1)}}^{d-1}\right)\quad \text{as}\quad d\to\infty,
  \end{equation}
 and hence this class of problems is weakly tractable in $d$.
  \end{thm}
  \begin{proof}
  Formula \eqref{eq:CC} is known from~\cite[Thm.~2]{MR3724810}.
  Indeed, this author bounds the singular values of the multivariate problem
  from known bounds for the underlying univariate ones by noting that
  the singular numbers~$s_{j}(A_{d})$ are the nonincreasing
  rearrangement of the  $d$th tensor power of the univariate numbers~$s_{j}(A_{1})$.
Solving this problem is not a
  trivial task, and the proof~\cite[Thm.~2]{MR3724810}  is based on
  ~\cite[Thm.~4.3]{KSU15}. The singular values for the univariate
  integration operator~$A_{1}$ are known to behave like~$s_{j}(A_{1}) \sim
  \frac{1}{\pi j},\ \text{as}~j\to\infty$.  Therefore, we can
apply~\cite[Thm.~2]{MR3724810} by setting $s=1$ and $c=\pi^{-1}$.

  The weak tractability in $d$ for the multivariate
    integration operator can be
  seen  by comparison with
  the decay rate of~$C(d)$ with
  \begin{equation} \label{eq:KSU1}
 \tilde C(d) \asymp \frac{2^d}{(d-1)!} \quad \mbox{as} \quad d \to \infty.
\end{equation}
The latter has been established in~\cite[Theorem~4.3]{KSU15} (case
$s=1$) for approximation numbers $s_j$ of embedding operators that
follow a rule analog to \eqref{eq:sn-multi}. By virtue
of~\cite[Corollary~5.3]{KSU15} the situation
\eqref{eq:KSU1} represents (quasi-polynomial) tractability,  and
hence weak tractability. Since the decay rate of $C(d)$ from
\eqref{eq:CC} is higher than the rate of $\tilde C(d)$ from
\eqref{eq:KSU1}, this implies that also the direct problem with multivariate integration operators is weakly tractable in $d$.
By virtue of Corollary~\ref{cor:inverse-direct} this also implies
the tractability of the inverse and ill-posed problem.
\end{proof}

\section*{Acknowledgments}
The authors are very grateful to Thomas K\"uhn (Leipzig) for pointing
out the asymptotics in formula~\eqref{eq:CC} for the singular values of the multivariate integration operator.
 The first named author  gratefully acknowledges the support of the Leibniz Center for Informatics,
where several discussions about this research were held during the Dagstuhl Seminar
``Algorithms and Complexity for Continuous Problems'' (Seminar ID 23351).
The second named author has been supported by the German Science Foundation (DFG) under grant~HO~1454/13-1 (Project No.~453804957).


\begin{thebibliography}{10}

\bibitem{MR1477662}
R.~Bhatia.
\newblock {\em Matrix Analysis}, volume 169 of {\em Graduate Texts in
  Mathematics}.
\newblock Springer-Verlag, New York, 1997.

\bibitem{Copulas06}
A.~Charpentier, J.-D. Fermanian, and O.~Scaillet.
\newblock The estimation of copulas: theory and practice.
\newblock In J.~Rank, editor, {\em Copulas: From Theory to Application in
  Finance}, pages 35--64. Risk Books, London, 2006.

\bibitem{MR1408680}
H.~W. Engl, M.~Hanke, and A.~Neubauer.
\newblock {\em Regularization of inverse problems}, volume 375 of {\em
  Mathematics and its Applications}.
\newblock Kluwer Academic Publishers Group, Dordrecht, 1996.

\bibitem{HF23}
B.~Hofmann and H.-J. Fischer.
\newblock A note on the degree of ill-posedness for mixed differentiation on
  the d-dimensional unit cube.
  \newblock {\em J. Inverse Ill-Posed Probl.}, 31(6):949--957, 2023.
  
\bibitem{MR2458286}
B.~Hofmann, P.~Math\'{e}, and M.~Schieck.
\newblock Modulus of continuity for conditionally stable ill-posed problems in
  {H}ilbert space.
\newblock {\em J. Inverse Ill-Posed Probl.}, 16(6):567--585, 2008.

\bibitem{MR3724810}
D.~Krieg.
\newblock Tensor power sequences and the approximation of tensor product
  operators.
\newblock {\em J. Complexity}, 44:30--51, 2018.

\bibitem{KL02}
T.~K\"{u}hn and W.~Linde.
\newblock Optimal series representation of fractional {B}rownian sheets.
\newblock {\em Bernoulli}, 8(5):669--696, 2002.

\bibitem{KSU15}
T.~K\"{u}hn, W.~Sickel, and T.~Ullrich.
\newblock Approximation of mixed order {S}obolev functions on the {$d$}-torus:
  asymptotics, preasymptotics, and {$d$}-dependence.
\newblock {\em Constr. Approx.}, 42(3):353--398, 2015.

\bibitem{MR3985479}
P.~Math\'{e}.
\newblock Bayesian inverse problems with non-commuting operators.
\newblock {\em Math. Comp.}, 88(320):2897--2912, 2019.

\bibitem{MR1984890}
P.~Math\'{e} and S.~V. Pereverzev.
\newblock Geometry of linear ill-posed problems in variable {H}ilbert scales.
\newblock {\em Inverse Problems}, 19(3):789--803, 2003.

\bibitem{MR3574546}
P.~Math\'{e} and S.~V. Pereverzev.
\newblock Complexity of linear ill-posed problems in {H}ilbert space.
\newblock {\em J. Complexity}, 38:50--67, 2017.

\bibitem{Neubauer88}
A.~Neubauer.
\newblock When do {S}obolev spaces form a {H}ilbert scale?
\newblock {\em Proc. Amer. Math. Soc.}, 103(2):557--562, 1988.

\bibitem{MR2455266}
E.~Novak and H.~Wo\'{z}niakowski.
\newblock {\em Tractability of multivariate problems. {V}ol.~1: {L}inear
  information}, volume~6 of {\em EMS Tracts in Mathematics}.
\newblock European Mathematical Society (EMS), Z\"{u}rich, 2008.

\bibitem{MR2676032}
E.~Novak and H.~Wo\'{z}niakowski.
\newblock {\em Tractability of multivariate problems. {V}ol.~{II}: {S}tandard
  information for functionals}, volume~12 of {\em EMS Tracts in Mathematics}.
\newblock European Mathematical Society (EMS), Z\"{u}rich, 2010.

\bibitem{MR2987170}
E.~Novak and H.~Wo\'{z}niakowski.
\newblock {\em Tractability of multivariate problems. {V}ol.~{III}: {S}tandard
  information for operators}, volume~18 of {\em EMS Tracts in Mathematics}.
\newblock European Mathematical Society (EMS), Z\"{u}rich, 2012.

\bibitem{MR4626413}
L.~Plaskota and P.~Siedlecki.
\newblock Worst case tractability of linear problems in the presence of noise:
  linear information.
\newblock {\em J. Complexity}, 79:Paper No. 101782, 20, 2023.

\bibitem{Schmeisser07}
H.-J. Schmeisser.
\newblock Recent developments in the theory of function spaces with dominating
  mixed smoothness.
\newblock In {\em N{AFSA} 8---{N}onlinear Analysis, Function Spaces and
  Applications. {V}ol. 8}, pages 144--204. Czech. Acad. Sci., Prague, 2007.

\bibitem{Tem18}
V.~Temlyakov.
\newblock {\em Multivariate Approximation}, volume~32 of {\em Cambridge
  Monographs on Applied and Computational Mathematics}.
\newblock Cambridge University Press, Cambridge, 2018.

\bibitem{MR958691}
J.~F. Traub, G.~W. Wasilkowski, and H.~Wo{\'z}niakowski.
\newblock {\em Information-based complexity}.
\newblock Computer Science and Scientific Computing. Academic Press Inc.,
  Boston, MA, 1988.
\newblock With contributions by A. G. Werschulz and T. Boult.

\bibitem{MR1328645}
H.~Triebel.
\newblock {\em Interpolation theory, function spaces, differential operators}.
\newblock Johann Ambrosius Barth, Heidelberg, second edition, 1995.

\bibitem{Uhlig15}
D.~Uhlig and R.~Unger.
\newblock Nonparametric copula density estimation using a {P}etrov-{G}alerkin
  projection.
\newblock In M.~Scherer K.~Glau and R.~Zagst, editors, {\em Innovations in
  Quantitative Risk Management: TU München, September 2013}, pages 423--438.
  Springer open, Cham/Heidelberg/New York, 2015.
\newblock https://link.springer.com/book/10.1007/978-3-319-09114-3.

\bibitem{MR1263379}
H.~Wo\'{z}niakowski.
\newblock Tractability and strong tractability of linear multivariate problems.
\newblock {\em J. Complexity}, 10(1):96--128, 1994.

\end{thebibliography}
\end{document}